\documentclass[11pt]{article}
\usepackage{amssymb,latexsym,amsfonts,verbatim,amscd}
\usepackage[all]{xy}

\newtheorem{Def}{Definition}[section]

\newtheorem{Lem}[Def]{Lemma}
\newtheorem{Prop}[Def]{Proposition}

\newtheorem{Rem}[Def]{Remarks}

\font\nat msbm10 scaled\magstephalf
\def\N{\hbox{\nat\char78}}

\def\telos{\hfill$\dashv$}

\begin{document}

\title{An observer-based approach to the sorites paradox and the logic derived from that}
\author{Athanassios Tzouvaras}

\date{}
\maketitle

\begin{center}
Aristotle University of Thessaloniki \\
Department  of Mathematics  \\
541 24 Thessaloniki, Greece \\
  e-mail:\verb"tzouvara@math.auth.gr"
\end{center}

\begin{abstract}
We approach the sorites paradox (SP) through an observer-based and time-dependent approach to truth of vague assertions. Formally the approach gives rise to a semantics,  called fluxing-object semantics (FOS),  because it  involves  models  that contain ``fluxing objects'', that is, entities changing with time and observer. The models are equipped with agents (observers) and a linear and discrete time axis for time. The changing entities  are represented by partial functions of time and agent, and this partiality causes truth-value gaps. If we interpret  a truth-value gap as a third truth value, then  FOS becomes a three-valued logic, which, quite interestingly, is proved identical to strong Kleene three-valued logic.   The sorites phenomena can be represented in a structure of FOS as special objects that change imperceptibly with respect to an observer and with respect to  a particular attribute. In  this account  the  key point that eliminates the paradoxical character of sorites is the partiality of functions. When the observer is fixed the partiality  corresponds  to interrupted watching on his part. The interruption creates watching gaps during which the attributed property  of the object as understood by the observer  may change, without violating the imperceptibility condition. Interrupted watching has, according to experts on visual  attention and focusing capabilities of humans, a  firm physiological justification. The relationship of watching gaps with horizon crossing is also discussed.
\end{abstract}

\vskip 0.1in

{\em Mathematics Subject Classification (2020)}:  03B80, 06B50

\vskip 0.1in

{\em Key words and phrases.} Sorites paradox, vague predicate,  watching gap, fluxing-object semantics, imperceptibly changing object, strong Kleene logic.

\section{Introduction}

The sorites paradox (henceforth abbreviated SP) is a well-known and  persistent puzzle arising in the  area of vagueness, specifically in  discrete vagueness, i.e., vague predicates which refer to quantities measured by natural numbers (baldness, bigness, heapness, etc).

Mathematically  SP amounts to failure of induction: Some vague predicates create  subsets of $\N$ without least element.  For example if $P(n)$ denotes the property  ``$n$ is small'' (or ``$n$ is not big''),  $n$ a positive integer, then clearly the extensions of both $P(n)$ and $\neg P(n)$ are nonempty. Moreover, since common wisdom necessitates $P(n)\Rightarrow P(n+1)$, the transition from the extension of $P(n)$
to that of $\neg P(n)$  is  done in a mysterious and
incomprehensible way, without crossing any boundary and without
one being able to tell  when and  how this transition is
realized. We call such a transition from $P(n)$ to $\neg P(n)$, or
conversely,  a {\em  vague transition.} Vague transitions are of
course genuinely paradoxical (i.e., contradictory) for the set
$\N$ of ordinary integers.  If we insist to assign set extensions
to such predicates, then the only mathematical
option available seems to be the use of nonstandard models of
arithmetic, which do permit the existence of sets without least
element.
This approach has been  taken in \cite{Tz98}, where it is
shown that such models do represent faithfully most of the
fundamental aspects of vague predicates. For example if $M$ is a
nonstandard model with $\N\subset M$, then the sets $\N$ and
$M\backslash \N$ model a vague transition, since $\N$  and $M\backslash \N$ have no
last and first elements, respectively.
But nonstandard integers are totally infeasible and hence
unable to be used in everyday experience (more in section 4.2).

In this paper we  provide a  formal description of vague
transitions without leaving the ground of standard natural numbers. As for the logic that the changing (in fact fluxing) entities obey,  we shall adopt a special semantics suitable for their description, that  heavily relies on  a factor not taken into account in
standard treatments of SP (e.g. those of \cite{Fi75}, \cite{Ke00} and \cite{Sh06}). Namely, the role of an {\em observer} who is present at each particular
sorites event and announces whether (they believe that) a given object   $a$ satisfies $P$ or non-$P$. This semantics, called fluxing object semantics (FOS),  will be considered in  section 2.

\subsection{The content  of the paper}
Our approach adopted  here to  SP is based to  an agent-dependent and time-dependent formalization of truth. It is a variant of classical semantics of  propositional  logic, with the difference that the objects of the ground model are not constant but ``fluxing'', i.e., constantly changing, as {\em partial} functions of time and observer.  It is called fluxing-object semantics (FOS). In FOS we have a time axis represented by $\N$ and a set $I$ of observers (although we mostly  refer to a single observer $i$). The changing objects are represented by partial (actually finite) functions from $I\times \N$ into a classical relational  structure ${\cal A}=\langle A,R_1,R_2,\ldots\rangle$. The partiality of  functions $f$ makes it possible to have points $t<t'<s$ on the time axis such that $f(i,t)$ and $f(i,s)$ are defined, while $f(i,t')$ is not. This is interpreted as a ``watching gap'' between $t$ and $s$ concerning the observation of the object $f$ by the observer $i$. Watching gaps entail that the observation of objects and situations is generally done interruptedly, or intermittently.  The total period of watching splits into  disjoint  intervals  of time, with watching gaps among them. By the definition of imperceptibly changing situations  no vague transition is possible  during the time of
uninterrupted perceptual focusing.  Therefore if any vague transition takes place within a specific changing situation,  it takes place  only during a watching gap. Since there  can be plenty of watching gaps in the formal representation of objects, this easily provides  a solution to SP.

Another  remarkable feature of  FOS is the following. As expected, the partiality of the functions used in the FOS domains, i.e. the watching gaps, produces truth-value gaps in its satisfaction relation $\Vdash$.  If  we interpret these gaps as a third truth value (undetermined), then FOS gives rise to a three-valued logic. It is   interesting that  this logic (in the form of truth tables for the connectives $\wedge$, $\vee$ and $\neg$) is identical to the strong Kleene three-valued logic.

The paper is organized as follows: In  section 2  we  sketch the framework of FOS and prove that it is a kind of sound and incomplete logic. In section 3 we treat the paradox SP in the framework of FOS and show how it is eliminated.   In section 4 we  cite the views of some experts in the fields of cognitive psychology and theory of vision on the question of plausibility of the idea of watching gaps. We also discuss how watching gaps explain horizon crossings. Finally in the Appendix we prove that FOS has the same truth tables with strong Kleene logic.

\section{Fluxing Object Semantics (FOS)}
In contrast to the objects of mathematics which are abstract and
timeless, physical objects are always time dependent and  most
often also agent dependent. So, whereas  17 has been and is going
to be  a prime number for ever and for any person who understands
the definition of a prime, a man $a$ can be nonbald at a time $t$
for an agent $i$ and bald at a time $t'>t$ for the same agent.
Also $a$ can be nonbald for an agent $i$ and bald for another
agent $j$ at the same time $t$. In the aforementioned  examples
two kinds of things are involved: Individuals (either abstract
like 17, or concrete like a specific man) and properties
(primeness, baldness). In fact  in the Fregean-Russellian
tradition the entire world is made of individuals and properties
of individuals. This tradition provides  a rather
simplified picture of the world but  we shall follow it here because it
underlies the whole of first-order logic. Classical logic is the
most secure ground for our  thinking, but it's rather a narrow one. For instance  it fails to treat (directly) changing entities. Our aim
here is to modify the standard  semantics in order to
accommodate truths about such entities which vary with time and  agent.

Let us start with  a first-order relational language
$L=\langle P_1,P_2,\ldots\rangle$,  where each $P_k$ is a predicate symbol of
some arity $n_k>0$. $L$ might contain also constants, but below we
shall use names for all elements of the structures interpreting
$L$, so special constants would be redundant.  Formulas and
sentences of $L$ are defined as usual.  A first-order $L$-structure is as usual a tuple ${\cal A}=\langle A,R_1,R_2\ldots\rangle$ where
$A$ is a nonempty set and each $R_k$ is an $n_k$-relation on $A$
that interprets $P_k$, i.e., $P_k^{\cal A}=R_k$. The elements of
$A$ are supposed to represent physical objects, like quantities
of sand, or snapshots of a gradually and slowly
changing entity, like the hairy part of one's head, or the stages
of a ripening fruit, etc. Given ${\cal A}$, $L(A)$ denotes the
language $L\cup A$, i.e., $L$ augmented with the elements of $A$
as parameters (names of themselves). Then for every sentence
$\phi$ of $L(A)$ ${\cal A}\models \phi$ is defined as usual.

In order now to define  objects over ${\cal A}$ which vary with
respect to time and agent, first we fix a nonempty set of {\em
agents} $I$.  Letters  $i,j$ range over elements of $I$. Time
will be represented by an  infinite, linearly and discretely ordered set $\langle T,<\rangle$. For simplicity throughout our set of moments will be identified with $\langle \N,<\rangle$. However we shall keep writing $t$ for time moments.
And of course  $t+1$ denotes the moment next to $t$.

A varying  object over ${\cal A}$ will be a {\em finite}
(hence partial) function $f:I\times \N\hookrightarrow A$
(the notation $f:X\hookrightarrow Y$ means that $dom(f)$ is a  subset of $X$).
The set $A$ is construed here as the set  of ``object forms'' of
the varying objects, satisfying certain conditions discussed
below. Intuitively, if $f(i,t)$ is defined, then
\begin{equation} \label{E:intended}
f(i,t) \ \mbox{:=``the form of $f$ perceived by $i$
at time $t$''}.
\end{equation}
If $rng(f)$ happens to be a singleton $\{a\}$, then we may think of $f$ as a constant  entity, not dependent on time and observer,
e.g. like the abstract entities of mathematics.
But if e.g.  $f$ is a specific man, say John, and $i$ is an observer, say Peter, and if Peter  {\em happens to watch} John
at time $t$, then  $f(i,t)$ denotes  John's {\em instant picture} or {\em form}\footnote{The contrast between the object-function $f$ and
its forms-values $f(i,t)$ roughly reminds the Platonic dichotomy
of beings into  ideas and their forms. $f$ is the lasting ``idea''
or ``identity'' of the object behind its temporal forms $f(i,t)$. Objects flow
but something underlies this fluxing. In contrast,  the older
Heraclitean model of everything being just a  fluxing form, denies that
anything stable underlies the flux. That would formally look like
having just values, not coming out of a specific function.}
perceived by Peter at $t$. Otherwise $f(i,t)$ is not defined. Note that $f$ is finite if and only if $dom(f)$ is finite. In practice, since we always focus on the behavior of a single agent $i$, we are interested  in the properties of the functions $f_i$,  where
$$f_i(t):=f(i,t),$$
rather than $f$.  The reason that we require $f$ to be  finite rather than just  partial, is that   if $I$ has more than one element, then $f$ may be partial but for some $i\in I$, it could be   $dom(f_i)=\N$. That would mean that $i$ watches the object $f$ constantly along the entire time axis, which is strongly unrealistic. Even if we require that for each $i$, $f_i$ is partial, still $dom(f_i)$ might be infinite. The finiteness condition for $f$ makes things more natural, as  being in accord with the finite resources of real observers.

What about relations? Do they also vary with respect to time or
agent or both? The natural answer is no. According to common wisdom  properties are abstract and stable with respect to time and agents, at least much more stable than the physical entities. So if for example  Peter  says that
``John  is nonbald'' at a time $t$, and at a later time $t'>t$  he changes his view to  ``John  is bald'', we do not attribute this
change to a change of the predicate of baldness, but rather to a
change of John. Thus we shall assume throughout that, in contrast
to objects,  predicates $P_1,P_2,\ldots$ are {\em independent} of time
and agent.\footnote{On the other hand there exist predicates which indeed depend on the context where they are used. For example the predicate ``big'' has quite different  meaning and  extension when it is used to describe   the number of stars of the galaxy,  the number of  grains of sand in a heap, and the number of pupils in a school class. So throughout we assume that each predicate $P_i$ is always interpreted in some fixed  context. }

As usual we write
$f(i,t)\!\!\downarrow$, or $f_i(t)\!\!\downarrow$,  if  $\langle i,t\rangle\in dom(f)$, and $f(i,t)\!\!\uparrow$ otherwise. In view of the intended meaning of $f(i,t)$ given in (\ref{E:intended}), two coexisting objects $f,g$ should satisfy the following condition:

\vskip 0.1in

(\dag) \quad \quad If  $f\neq g$ then  $rng(f)\cap rng(g)=\emptyset$.

\vskip 0.1in

\noindent This is because $rng(f)$ is the ``trajectory'' of $f$ in the world, and no
two distinct objects can have overlapping trajectories.
On the other hand if  $f\neq g$ we may have  $dom(f)\cap dom(g)\neq \emptyset$, which means that  an observer $i$ can watch at time $t$  simultaneously two or more different objects. For instance this is the case of an observer watching both John and Bill and announcing   ``John is taller than Bill''. In general an observer is supposed to be able to watch a situation which consists of many distinct constituents.

\begin{Def} \label{D:special}
{\em Let ${\cal A}=\langle A,R_1,R_2,\ldots\rangle$ be a first-order structure for
the language $L=\langle P_1,P_2,\ldots\rangle$. Let $I$ be a set of agents, and
let $Fin(A^{I\times \N})$ denote the set of all finite functions
with $dom(f)\subset I\times \N$ and $rng(f)\subset A$. A } fluxing
structure over ${\cal A}$ {\em is  a structure ${\cal
F}=\langle F,R_1,R_2,\ldots\rangle$ such that  $F\subseteq Fin(A^{I\times \N})$ and the elements of $F$ satisfy condition (\dag) above. }
\end{Def}
Several characteristics of the objects $f$ can be defined. What we shall need below is only  the following. Recall that for every $f\in F$ and $i\in I$, $f_i$ is the function defined by $f_i(t)=f(i,t)$. Let
$$W_i(f)=dom(f_i)=\{t:\langle i,t\rangle\in dom(f)\}.$$
$W_i(f)$  is the set of all time moments at which $i$ watches the object $f$.

Given a fluxing structure  ${\cal F}=\langle F,R_1,R_2\ldots\rangle$, let $L(F)$
denote as usual  $L$ augmented with the elements of the
set $F$ as parameters.

What we need  below is actually a notion of ``situated truth'' just for atomic sentences of the form  predicates
$P_k(f_1,\ldots,f_{n_k})$, and their negations, where $P_k$ is an $n_k$-relation symbol
and $f_1,\ldots,f_{n_k}\in F$, with respect to ${\cal F}$ and a
pair $\langle i,t\rangle\in I\times\N$, denoted $${\cal F},
\langle i,t\rangle\Vdash P_k(f_1,\ldots, f_{n_k}).$$ However, for reasons of
completeness we give a definition of ${\cal F}, \langle i,t\rangle\Vdash\phi$
for every propositional sentence $\phi$ of $L(F)$, i.e. a sentence built from  atomic sentences $P_k(f_1,\ldots, f_{n_k})$, for $f_i\in F$,  with  connectives $\wedge$, $\vee$ and $\neg$.

\begin{Def} \label{D:support}
{\em Given a fluxing structure ${\cal F}=\langle F,R_1,R_2,\ldots\rangle$ over
${\cal A}$, a pair $\langle i,t\rangle\in I\times \N$ and a sentence $\phi$ of
$L(F)$, the relation ``${\cal F},i,t\Vdash\phi$'' is defined
inductively as follows: :

a) ${\cal F},i,t\Vdash P_k(f_1,\ldots, f_{n_k})$ iff
$f_1(i,t)\!\downarrow \ \& \cdots  \& \ f_{n_k}(i,t)\!\downarrow$  $
\& \ {\cal A}\models P_k(f_1(i,t),\ldots, f_{n_k}(i,t))$, i.e., if all $f_{n_l}(i,t)$ are defined and  $\langle f_1(i,t),\ldots, f_{n_k}(i,t)\rangle\in R_k$.

b) ${\cal F},i,t\Vdash\neg P_k(f_1,\ldots, f_{n_k})$ iff
$f_1(i,t)\!\downarrow \& \cdots \ \& \ f_{n_k}(i,t)\!\downarrow$  $ \&
\ {\cal A}\models \neg P_k(f_1(i,t),\ldots, f_{n_k}(i,t))$, i.e., if all $f_{n_l}(i,t)$ are defined and $\langle f_1(i,t),\ldots, f_{n_k}(i,t)\rangle\notin R_k$.

c) ${\cal F},i,t\Vdash\phi \wedge \psi$ iff ${\cal F},i,t\Vdash\phi$ and ${\cal F},i,t\Vdash\psi$.

d) ${\cal F},i,t\Vdash\phi \vee \psi$ iff ${\cal F},i,t\Vdash\phi$ or ${\cal F},i,t\Vdash\psi$.

e) ${\cal F},i,t\Vdash\neg(\phi \wedge \psi)$ iff ${\cal F},i,t\Vdash\neg \phi \vee \neg \psi$.

f) ${\cal F},i,t\Vdash\neg(\phi \vee \psi)$ iff ${\cal F},i,t\Vdash\neg \phi \wedge \neg \psi$.

g)  ${\cal F},i,t\Vdash\neg\neg\phi$ iff  ${\cal F},i,t\Vdash\phi$. }
\end{Def}

We shall refer to the semantics defined above as {\em fluxing-object semantics} (FOS).

\vskip 0.1in

FOS is a kind of logic without axioms and rules of inference, but only  a semantic part which is given through the satisfaction relation $\Vdash$. We  can   read ${\cal F},i,t\Vdash\phi(f)$ as {\em ``${\cal F}$ supports $\phi$ at
$\langle i,t\rangle$'',} or, better, {\em ``$i$ sees (or believes, or thinks)
that  $f$ satisfies $\phi$ at time $t$ in ${\cal F}$''}. We show below that FOS is sound, i.e., no structure proves contradictions, but, due to the partiality of the functions in every domain ${\cal F}$,  it is incomplete, that is there are $\phi$ such that for some $i,t$, ${\cal F},i,t\!\not\Vdash\phi$ and ${\cal F},i,t\!\not\Vdash\neg\phi$.  This is why in Def. \ref{D:support} we need to define also the truth of the  negations of complex formulas.

\begin{Lem} \label{L:sound} (i) FOS is sound: there are no ${\cal F}$, $i$, $t$ and $\phi$ such that  ${\cal F},i,t\!\Vdash\phi$ and ${\cal F},i,t\!\Vdash\neg\phi$.

(ii) FOS is incomplete: there are ${\cal F}$, $i$, $t$ and $\phi$ such that ${\cal F},i,t\!\not\Vdash\phi$ and ${\cal F},i,t\!\not\Vdash\neg\phi$.
\end{Lem}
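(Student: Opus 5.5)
Part (i) goes by induction on the complexity of $\phi$. The induction must be carried out carefully, because Def. \ref{D:support} defines $\Vdash\neg\phi$ not via failure of $\Vdash\phi$ but by clauses (b), (e), (f), (g), which push negation inward. I will therefore prove, by induction on the construction of $\phi$ as a propositional sentence of $L(F)$, the statement
\[
\mbox{for all } i,t:\quad \neg\big({\cal F},i,t\Vdash\phi \ \&\ {\cal F},i,t\Vdash\neg\phi\big).
\]
The base case is $\phi=P_k(f_1,\ldots,f_{n_k})$. By (a) and (b), both supports require all $f_l(i,t)$ to be defined. They would then require $\langle f_1(i,t),\ldots,f_{n_k}(i,t)\rangle\in R_k$ and $\notin R_k$ simultaneously, which is impossible since ${\cal A}$ is a classical structure.

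For the conjunction case $\phi=\psi\wedge\chi$, clause (e) gives ${\cal F},i,t\Vdash\neg\phi$ iff ${\cal F},i,t\Vdash\neg\psi$ or ${\cal F},i,t\Vdash\neg\chi$. If ${\cal F},i,t\Vdash\phi$ also holds, then by (c) both $\psi$ and $\chi$ are supported. Whichever of $\neg\psi$, $\neg\chi$ holds therefore contradicts the induction hypothesis for $\psi$ or for $\chi$. The disjunction case $\phi=\psi\vee\chi$ is dual, using (d) and (f). The negation case $\phi=\neg\psi$ needs $\neg\neg\psi$ to be unpacked via (g): ${\cal F},i,t\Vdash\neg\neg\psi$ iff ${\cal F},i,t\Vdash\psi$. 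So supporting both $\neg\psi$ and $\neg\neg\psi$ means supporting both $\neg\psi$ and $\psi$, which the induction hypothesis for $\psi$ excludes. Note that the hypothesis is applied only to $\psi$, a proper subformula, so no stronger induction measure is required.

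The main obstacle is precisely this bookkeeping: the induction is on $\phi$, yet the clauses for $\neg\phi$ refer to $\neg\psi$, $\neg\chi$ and $\psi$. The statement being proved is symmetric in $\psi$ and $\neg\psi$, so it already covers these cases. If that turns out to be delicate, my fallback is to strengthen the claim to ``for each subformula $\psi$, not both $\psi$ and $\neg\psi$ are supported,'' which is exactly what the cases above use.

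For (ii), an example suffices. Take any ${\cal A}$ with a unary relation $R_1$, a single agent $i$, and $f=\{\langle\langle i,0\rangle,a\rangle\}$ for some $a\in A$. Then ${\cal F}=\langle\{f\},R_1\rangle$ satisfies (\dag) trivially. At $t=1$ we have $f(i,1)\!\uparrow$, so by (a) and (b) neither $P_1(f)$ nor $\neg P_1(f)$ is supported at $\langle i,1\rangle$.
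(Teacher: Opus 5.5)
Your proof is correct and follows the same approach as the paper. For (i) you use induction on $\phi$: the atomic case comes from clauses (a)--(b), the $\neg\psi$ case reduces to $\psi$ via (g), and the $\wedge$/$\vee$ cases use (e) and (f). For (ii) you give an undefined-value counterexample, with slightly more explicit detail than the paper (a concrete ${\cal F}$ checked against (\dag), and unpacking (e)/(f) through (d)/(c)).
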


{\em Proof.} (i) By induction on the complexity of $\phi$. (1) If $\phi$ is atomic, it is of the form $P_k(f_1,\ldots,f_n)$, for some predicate $P_k$ of $L$ and  some  $f_1,\ldots,f_n\in F$. Then the claim follows  from clauses (a) and (b) of Def. \ref{D:support}. (2) If the claim holds for $\phi$, it holds also for $\neg\phi$ because of clause (g) of \ref{D:support}.  (3) We assume the claim holds for $\phi$ and $\psi$ and prove it for $\phi\wedge\psi$. Suppose ${\cal F},i,t\!\Vdash\phi\wedge\psi$ and ${\cal F},i,t\!\Vdash\neg(\phi\wedge\psi)$. By clause (e) of \ref{D:support} the latter is equivalent to ${\cal F},i,t\!\Vdash\neg\phi\vee\neg\psi$. So we have both $\phi$ and   $\psi$ to be true and also some of the $\neg\phi$ and $\neg\psi$ to be true, which contradicts the induction assumption. (4) Similarly we show the claim for $\phi\vee\psi$ in view of clause (f) of \ref{D:support}.

(ii)  Take for example $\phi(x)$ to be the unary predicate $P(x)$ of baldness.  Let also $f\in F$ be a fluxing object, $i$ be an observer and  $t\in \N$ such  that  $f(i,t)$ is not defined, i.e. $f(i,t)\!\uparrow$. Then by clauses (a), (b) of  \ref{D:support}, ${\cal F},i,t\!\not\Vdash P(f)$ and ${\cal F},i,t\!\not\Vdash\neg P(f)$. \telos

\vskip 0.2in

Now if we interpret ``${\cal F},i,t\!\Vdash\phi$'' as ``$\phi$ is true in ${\cal F},i,t$''  and ``${\cal F},i,t\Vdash\neg\phi$'' as ``$\phi$ is false  in ${\cal F},i,t$'', and write  ${\cal F}_{i,t}(\phi)=\textsf{T}$ and  ${\cal F}_{i,t}(\phi)=\textsf{F}$, respectively, then we can interpret ``${\cal F},i,t\!\not\Vdash \phi$ and ${\cal F},i,t\!\not\Vdash\neg \phi$'' as ``$\phi$ is undetermined in ${\cal F},i,t$'' and write  ${\cal F}_{i,t}(\phi)=\textsf{U}$. This way FOS can be seen as a three-valued logic, with truth values $\{\textsf{T},\textsf{U},\textsf{F}\}$ and designated value $\textsf{T}$. An interesting fact about this logic is that  coincides with the strong Kleene three-valued logic, in the sense that their truth tables for $\wedge$, $\vee$ and $\neg$ are identical. This result is proved in the Appendix at the end of the paper.

\vskip 0.2in

By definition the objects of ${\cal F}$ are changing. Of particular
importance, however, are the objects that change {\em imperceptibly}
with respect to the agents they watch them. (For simplicity we assume that imperceptibility is an inter-subjective condition, i.e., if an object changes imperceptibly with respect to some  agent $i$, it does so with respect to any other agent $j$.)

\begin{Def} \label{D:ic}
{\em Let ${\cal F}=\langle F,R_1,R_2,\ldots\rangle$ be fixed, let $f\in
F\subseteq  Fin(A^{I\times \N})$ and let $\phi(x)$ be a property
of $L(F)$. $f$ is said to be } imperceptibly changing (i.c.) with
respect to $\phi$ {\em if
$$(\forall i\in I)(\forall t\in \N)[f(i,t)\!\downarrow \& f(i,t+1)\!\downarrow \Rightarrow (\langle i,t\rangle\Vdash\phi(f) \Leftrightarrow \langle i,t+1\rangle\Vdash\phi(f))]. $$}
\end{Def}

Intuitively, $f$ is i.c.\footnote{In using the term
``imperceptibly changing'', the emphasis is on ``imperceptibly''
rather than on ``changing''. That is to say, $f$ is i.c. even if
it doesn't change at all, i.e., if $f(i,t)=f(i,t+1)$. } with
respect to $\phi$, if for every agent $i$, if he/she  watches $f$ at two consecutive moments $t$ and $t+1$, then he believes that  $f$ satisfies
$\phi$ at  $t$ if and only if  he believes that $f$ satisfies $\phi$ at $t+1$. The reason of course for the existence of i.c. objects with
respect to a property $\phi$, is that the rate of change, which is related to the predicate $\phi$, is very low w.r.t. to the human
discerning abilities. For example aging presumably causes a constant
change in human face and body but this change cannot be detected within some   days or months or even years.

\vskip 0.2in

{\bf Examples of i.c. objects}

1) Let $M$ be a specific man, and let $A$ be a set which, possibly
among other things, contains the snapshots of the hairy part of
$M$'s head, taken by several observers $i\in I$ at several time
moments $t\in \N$ along a period of time. If  $f\in Fin(A^{I\times
\N})$ and $f(i,t)\!\downarrow$, we assume that  $f(i,t)$ is the
snapshot taken by $i$ at time $t$. Let $P(x)$ be the unary
predicate of baldness in the language $L$ which is interpreted by
the unary relation $R\subseteq A$ of baldness over $A$. So we are
interested in the first-order structure ${\cal A}=\langle A,R\rangle$ and a
fluxing structure ${\cal F}$ over ${\cal A}$. Actually every $f\in
Fin(A^{I\times \N})$ is an i.c. object with respect to $P$ since
baldness is a property not changing,
with respect to any observer, within two consecutive moments. That
is, for any two consecutive snapshots $f(i,t)$ and $f(i,t+1)$ by
the same observer $i$, if they are both defined, then either
${\cal A}\models P(f(i,t)) \wedge P(f(i,t+1))$ or ${\cal A}\models
\neg P(f(i,t)) \wedge \neg P(f(i,t+1))$. So if $f(i,t)\!\downarrow$
and $ f(i,t+1)\!\downarrow$, then  $\langle i,t\rangle\Vdash P(f) \iff
\langle i,t+1\rangle\Vdash P(f)$. Therefore $f$ satisfies the conditions of
definition \ref{D:ic}.

\vskip 0.1in

2) As another example, let $D$ be a device pouring out a grain of
sand per unit of time and let $A$ be the set of growing
quantities of sand that constitute snapshots taken by observers  at
various time moments.  For every  $f\in Fin(A^{I\times \N})$,
$f(i,t)$ denotes  the quantity of sand perceived by $i$ at $t$ (if
$i$ does watch the sand at $t$). Let $P(x)$ denote the predicate
of ``heapness'' and let $H\subseteq A$ be the extension of this property
on $A$.  That is, if $a\in A$ is a quantity of sand, $a\in H$
means that  $a$ is a heap. We are interested in the structure
${\cal A}=\langle A,H\rangle$ and a fluxing structure ${\cal F}$ over ${\cal
A}$. Since a single grain of sand cannot affect, according to
common wisdom, the property of heapness, it is clear  that every $f\in Fin(A^{I\times \N})$ is an
i.c. object with respect to the predicate $P$.\footnote{Note that in each case  the length of time units must be chosen according to the specific i.c. situation we are going to describe. If e.g. the sand is poured out  very rapidly, while the time unit used by the observer is considerably great, then most
likely the condition of definition \ref{D:ic} will be violated, since
an observer $i$ may decide that $f(i,t)$ is not a heap but
$f(i,t+1)$ {\em is}. For example if the observer's time unit is  one
second, while the rate of change of the heap is one grain of sand per  millisecond, then   1000 elementary changes will be  accumulated at each observer's snapshot,  which of course may be summed up into a perceptible change. So $n$ should always count time periods very close to the rate at which  the situation is changing.}

\vskip 0.1in

3) The third example is somewhat more ``abstract''. Consider  a
mechanical number counter displaying   successive integers
$1,2,\ldots, 1013,1014,\ldots$ in increasing order. Suppose that at
time $n$ the number $k+n$ is being displayed (where $k$ is a
constant depending on the counter). $A$ is the set of displayed
integers and $P(x)$ is a predicate for largeness. An agent $i$
watches the monitor from time to time and claims that the
displayed integer is large or non-large. If $f(i,t)\!\downarrow$,
then $f(i,t)$ is the integer observed by $i$ at $t$ and
$P(f(i,t))$ or $\neg P(f(i,t))$ is his claim, respectively. Let
$R\subseteq A$ interprets $P$. We are considering the structure
${\cal A}=\langle A,R\rangle$ and a fluxing structure ${\cal F}$ over ${\cal
A}$. $\langle i,t\rangle\Vdash P(f)$ expresses the fact that ``$i$ observes
the integer $f(i,t)$ at $t$  and believes that $f(i,t)$ is
large''. Since largeness is not affected by adding or subtracting
1, clearly every $f\in Fin(A^{I\times \N})$ is an i.c. object with respect to the predicate $P$.

\vskip 0.1in

$\textbf{Remark}$. Concerning the first of the preceding examples, which refers to the hairy part of the head of a man $M$,  one may object that taking a snapshot of $M$'s head,  an observer possibly could not decide whether $M$ is bald or not bald but, using modifying adverbs, he/she may say that the man is ``almost bald'', or ``completely bald'', or ``definitely not bald'' and the like, something which is quite common for vague notions. This is true but it does affect  the {\em core} of the sorites paradox itself. The core of SP is the {\em vague transition} from a quality $P$ to a {\em different} quality $Q$ ``without crossing any boundary and without one being able to tell  when and  how this transition is realized'', as we said in the Introduction. For the sake of emphasis, we take these different qualities to be  opposite to one another, $P$ and $\neg P$, or  ``bald'' and ``non-bald''. It would make no difference if instead we took these qualities to be ``bald'' and ``almost bald''. For in that case  the question would accordingly be how a man passes {\em vaguely}  by the time  from the state of ``almost baldness'' to the state of ``baldness''.

\section{SP in the framework of  FOS}
Let's see how SP is treated in FOS. Recall
from definition \ref{D:special} that to every object $f\in
Fin(A^{I\times \N})$ and every agent $i$, there corresponds the
watching time of $f$ by $i$ $$W_i(f)=dom(f_i)=\{t\in \N:\langle i,t\rangle\in
dom(f)\},$$ which is, by definition, a  finite subset of $\N$. For every
$t\leq s\in \N$, let $[t,s]$ denote the closed interval $\{n\in
\N:t\leq n\leq s\}$. Trivial one-element intervals $[n]=\{n\}$ are allowed. Now clearly  every
finite set $X\subset \N$ has a unique analysis into  a finite
union of disjoint finite  consecutive and maximal  intervals of $X$
$$X=\bigcup_{m=1}^p[t_m,s_m].$$ Due to consecutiveness and maximality,  for each $m$ $[t_m,s_m]<[t_{m+1},s_{m+1}]$ and  $s_m+1<t_{m+1}$. For otherwise for some $m$, $s_m+1=t_{m+1}$. But then $[t_m,s_{m+1}]$ is an interval of $X$  and  $[t_m,s_m]\subset
[t_m,s_{m+1}]\subseteq X$, which contradicts the maximality of $[t_m,s_m]$. Therefore for every $m=1,\ldots,p-1$,
\begin{equation} \label{E:conditions}
t_m\leq s_m \ \mbox{and} \ \ s_m+1<t_{m+1}.
\end{equation}
We call this partition of $X$ into disjoint maximal intervals satisfying conditions (\ref{E:conditions})
the {\em canonical analysis} of $X$. In particular, for every
$f,i$, $W_i(f)$, as a finite subset of $\N$, has a canonical analysis
\begin{equation} \label{E:maximal}
W_i(f)=\bigcup_{m=1}^p[t_m,s_m].
\end{equation}
$W_i(f)$ may consist either of a single interval of time, or of
several maximal disjoint intervals. The maximal intervals $[t_m,s_m]$ that constitute $W_i(f)$ are the {\em constituents} of $W_i(f)$.

\begin{Def} \label{D:gap}
{\em Let $t,s\in W_i(f)$ such that $t<s$. We say  that there is a} watching gap between $t$ and $s$ (for the agent $i$ with respect to the object $f$) {\em if there is a $t'\in \N$ such that $t<t'<s$ and $f_i$ is not defined at $t'$, i.e., $t'\notin dom(f_i)$. }
\end{Def}

\textbf{Note.} After the completion of this article, I came across   P. Pagin's work \cite{Pa10} and \cite{Pa11}, which bears some remarkable similarities  to this paper. Namely, the role played  in the present framework by the notion of  ``watching gap'' is played in Pagin's work by his notion of  ``central gap.'' Actually the two notions are surprisingly similar. The difference lies rather in the philosophical motivation behind them as well as in their implementation in different contexts.

\vskip 0.1in

A watching gap may be either a single time moment or an interval of $\N$. Equivalently, there exists a watching gap between $t<s$, if and only if  $W_i(f)$ consists of more than one constituent and $t,s$ do not belong to the same constituent. Obviously if $t<t'<s$ and $f(i,t')!\uparrow$, then $t'$ is located  between two consecutive constituents of $W_i(f)$, of the form $[t_k,s_k]$ and $[t_{k+1}, s_{k+1}]$ (see Figure 1.)

$$\overbrace{\cdots[_{t_m}\mbox{---$\cdot_{t}$---}\cdot ]_{s_m}\cdots[_{t_k}\mbox{----}]_{s_k}\underbrace{\quad  \cdot_{t'} \quad }_{\mbox{\scriptsize watching gap}}[_{t_{k+1}}\mbox{----}]_{s_{k+1}}\cdots[_{t_l}
\mbox{---$\cdot_{s}$---}]_{s_l}\cdots}^{W_i(f)}$$
\begin{center}
Figure 1
\end{center}

Existence of watching gaps for an agent $i$ who watches an object $f$, obviously implies {\em interrupted watching.} That is, while $i$ focuses on $f$ at a time $t$, at a time $t'>t$ he is distracted from $f$, turning to another object, say $g$,  and at a time $s>t'$ he focuses back to  $f$. Such a behavior is quite common  and natural concerning human observers as we shall argue in the last section.

\begin{Def} \label{D:shift}
{\em Let $i,f$ be as above, let $\phi(x)$ be a property of  $L$
with respect to which $f$ is an i.c. object, and let  $t,s\in
W_i(f)$ such that $t<s$. We say that} $i$  changes his view with respect to
$\phi$ between $t$ and $s$ {\em if $\langle i,t\rangle\Vdash\phi(f)$ and
$\langle i,s\rangle\Vdash\neg \phi(f)$, or $\langle i,t\rangle\Vdash\neg \phi(f)$ and $\langle i,s\rangle\Vdash\phi(f)$. }
\end{Def}

\begin{Prop} \label{P:onlyif}
Let $f$ be an $i.c.$ object with respect to a property $\phi$.  If an agent $i$ changes his view  with respect to $\phi$
between the moments  $t$ and  $s$, for $t<s$, then necessarily  there is a watching gap between $t$ and $s$.
\end{Prop}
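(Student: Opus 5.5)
We argue by contraposition: assume there is no watching gap between $t$ and $s$, and show that $i$ cannot change his view with respect to $\phi$ between $t$ and $s$. By Definition \ref{D:gap}, the absence of a gap means that $f_i(t')\!\downarrow$ for every $t'$ with $t<t'<s$. Since $t,s\in W_i(f)$ by hypothesis, the whole interval $[t,s]$ is contained in $W_i(f)=dom(f_i)$. Equivalently, $t$ and $s$ lie in the same constituent of the canonical analysis (\ref{E:maximal}).

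Next we propagate the truth value along $[t,s]$ by induction. For each $n$ with $t\le n<s$, both $f(i,n)\!\downarrow$ and $f(i,n+1)\!\downarrow$. Since $f$ is i.c. with respect to $\phi$, Definition \ref{D:ic} gives $\langle i,n\rangle\Vdash\phi(f)\Leftrightarrow\langle i,n+1\rangle\Vdash\phi(f)$. Chaining these finitely many equivalences, a finite induction on $n$, yields $\langle i,t\rangle\Vdash\phi(f)\Leftrightarrow\langle i,s\rangle\Vdash\phi(f)$.

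It remains to rule out both alternatives of Definition \ref{D:shift}.
\begin{itemize}
\item If $\langle i,t\rangle\Vdash\phi(f)$, then $\langle i,s\rangle\Vdash\phi(f)$. By soundness (Lemma \ref{L:sound}(i)) we get $\langle i,s\rangle\not\Vdash\neg\phi(f)$, so the first alternative fails.
\item If $\langle i,t\rangle\Vdash\neg\phi(f)$ and $\langle i,s\rangle\Vdash\phi(f)$, the equivalence gives $\langle i,t\rangle\Vdash\phi(f)$. Then $\langle i,t\rangle$ supports both $\phi(f)$ and $\neg\phi(f)$, contradicting Lemma \ref{L:sound}(i). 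So the second alternative fails too.
\end{itemize}
Hence no change of view occurs, which proves the contrapositive.

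The step needing care is the last one. Definition \ref{D:ic} transports only $\Vdash\phi$, not $\Vdash\neg\phi$, so an argument that merely propagates $\phi$ along the interval is not enough on its own. Because truth-value gaps exist, "not $\Vdash\phi$" is not the same as "$\Vdash\neg\phi$". We therefore invoke Lemma \ref{L:sound}(i) in each case to convert support of $\phi$ into non-support of $\neg\phi$ at the relevant moment, rather than assuming bivalence.
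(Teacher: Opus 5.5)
Your proof is correct and follows the paper's own argument: contraposition, the observation that having no gap forces $[t,s]\subseteq W_i(f)$, and a finite induction that chains the i.c.\ equivalences into $\langle i,t\rangle\Vdash\phi(f)\Leftrightarrow\langle i,s\rangle\Vdash\phi(f)$. Your closing appeal to Lemma~\ref{L:sound}(i) to rule out both alternatives of Definition~\ref{D:shift} supplies a step the paper leaves implicit (it simply asserts that this equivalence means there is no change of view), and you are right that truth-value gaps make this step genuinely necessary.
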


{\em Proof.} This is a rather immediate consequence of the
definition of i.c. objects with respect to $\phi$. It is clear
from definition \ref{D:ic} that if $t,t+1\in W_i(f)$, then $\langle i,t\rangle\Vdash
\phi(f) \Leftrightarrow \langle i,t+1\rangle\Vdash \phi(f)$. Let  $t<s\in W_i(f)$ and suppose there  is no watching gap between $t$ and $s$. Then clearly $[t,s]\subseteq W_i(f)$. If $s-t=n$, then by induction we
see that for every $0\leq k\leq n$ $\langle i,t\rangle\Vdash \phi(f)
\Leftrightarrow \langle i,t+k\rangle\Vdash \phi(f)$. Hence for $k=n$,
$\langle i,t\rangle\Vdash \phi(f) \Leftrightarrow \langle i,s\rangle\Vdash \phi(f)$, which
means that there is no change of view with respect to $\phi$
between  $t$ and  $s$, a contradiction. \telos

\vskip 0.2in

Proposition \ref{P:onlyif} says that whenever a change of view occurs with respect to an i.c. object, this is always due to the  existence of watching gaps. Since watching gaps are available in abundance by the very definition of the fluxing objects $f\in F$ and the structure of $W_i(f)$, Proposition \ref{P:onlyif} offers by the same token a solution to SP: Whenever a change of view occurs on the part of the agent $i$, just conclude that a watching gap for $i$ has  occurred between two contradictory assessments of the i.c. object $f$. This way SP ceases to be  a paradox.

\begin{Rem} \label{R:rec}
{\em (1) $W_i(f)$ need not always  contain watching gaps. For some $i$ and $f$ $W_i(f)$  may  well consist  of a unique interval of $\N$. In such a case however, for every i.c. property $\phi$ concerning $f$, $i$ cannot  change his view with respect to $\phi$ along $W_i(f)$.

(2) If   $t,s\in W_i(f)$, $t<s$ and there is a watching gap between $t$ and $s$, $i$ need {\em not}  change his view about $\phi$ along the interval $[t,s]$. That is,  the change of view is a {\em possible} rather than  a necessary phenomenon when a watching  gap occurs.}
\end{Rem}

\section{Some discussion}
In this last  section we discuss two themes: 1) If there is experimental evidence that real observers observe interruptedly, and 2) how the idea of watching gap can explain the rather mysterious crossing of a horizon.

\subsection{Do actual observers watch interruptedly?}
The question  enters the area of cognitive psychology and attention focusing theory, so  experts in these fields could provide  dependable assessments of it, possibly on the grounds  of
experimental evidence. I contacted  some specialists in
the field of cognitive psychology and vision. One of them is Erick
Weichselgartner who, together with G. Sperling, have supported in
\cite{SW95} the view that attention is  discrete (i.e.,
``quantal'', or ``episodic) rather than continuous. When I asked
Weichselgartner about the plausibility of the existence of watching gaps he said (personal
communication):
\begin{quote}
``Interrupted watching is well supported by empirical psychological findings.
Whatever the task: Subjects' error rates, and/or reaction times
increase by the time.''
\end{quote}

He said also  that the keywords in our case is not just
``attention'' but rather {\em vigilance} or {\em alertness} or
{\em sustained attention.} To my question whether ``humans (and
perhaps all mammals) observe, think and in general communicate
discontinuously and intermittently, in an on and off way, all the
time'', he replied:

\begin{quote}
``There is a controversy in the psychological literature as to
whether attention is continuous or discrete (see \cite{SN96},
\cite{SC82}, \cite{SW95}). As far as I can tell this question is
still unanswered. From a psychologist's point of view a negative
attitude towards interrupted watching  is in disagreement with the research
literature. I do not quite understand  your mathematical work, but
your assumptions with respect to psychology are fine!''
\end{quote}

I contacted also John Tsotsos, Distinguished Professor of Vision Science at York University. He  wrote to me (personal communication):
\begin{quote}
``When humans look at something, they cannot do so for a long period of time simply because of neural fatigue. If you fixate on a point with your eyes and do not move your eyes or blink or anything, the neurotransmitter material in the photoreceptors in your retina gets used up and has no time to be replenished. So things begin to fade - details in particular. Moving the eyes distributes the neurotransmitter material] where events fall on the retina and thus different receptors are used at different times. Think of a screen saver  that uses different portions of your screen at different times. It helps preserve the screen. Same thing in your eyes. And this actually applies to all the visual neurons in the visual cortex.''
\end{quote}

I find the above remark particularly important for the plausibility and justification of interrupted watching, since it explains in terms of the physiology of vision why a real observer cannot stare at something for a long period of time.

For various side effects due to the phenomenon of neural fatigue, like the fading of sharp black and white contours into a more muted, or gray, field, the adaptation of motion,  scintillation and shimmering, etc, Professor Tsotsos referred to \cite{So94} (p. 60).

\subsection{Interrupted watching  and  horizon crossing}

Let us first give  a brief overview of the notion of ``horizon''  which goes back to Husserl.  P. Vop\v{enka} uses also extensively this notion in his book \cite{Vo79} on the alternative set theory. A horizon  with respect to an observer $i$,  is a sequence of steps (of any sort: physical, perceptual, intellectual) stretching ahead  of $i$ with no  last element   {\em visible} by $i$. As such   it is a highly relative and subjective idea, yet quite indispensable in everyone's grasping of the reality. For example,  the collection of time moments  of one's life at which he thinks he is young is a horizon. There is also an ultimate horizon for every human, his own life-time itself. Every conscious agent knows that the world  extends beyond any particular horizon. Namely,  given any horizon $h$ there are steps   within $h$ and steps  beyond $h$. We refer to the transition from within to beyond $h$ as the {\em horizon crossing.} Obviously the latter coincides with what we called in the introduction ``vague transition'', i.e., the imperceptible transition from a property $P$ to its opposite $\neg P$.

Let us consider again the typical example of  the collection $Small_i$ of  numbers which are perceived by $i$ as small.  It gives  rise to  a horizon, since at each particular  time $t$, $i$ cannot identify a last element of $Small_i$, i.e., $n\in S_i$ implies $n+1\in Small_i$.  Abstracting from $t$, i.e., replacing  a real observer by an idealized one, the  absence of last element makes every such sequence look  potentially infinite.  Concerning the example just cited, this means that $Small_i$ looks infinite and yet $\N-Small_i\neq \emptyset$. A representation of this situation would require  a partition of $\N$ like the one  depicted in Figure 2, that is, a splitting of  $\N$  into an initial segment with no last element (represented by the dots) and a final segment with no  first  element.

$$\overbrace{\underbrace{\mbox{---------------------------}\cdots}_{Small_i})
(\underbrace{\cdots\mbox{----------------------}}_{non-Small_i}}^{\N}$$
\begin{center}
Figure 2
\end{center}
Of course such a splitting of the actual time axis is impossible, and this is in  fact equivalent to the sorites paradox.  The situation is theoretically remedied if one passes to a proper  extension of $\N$, i.e., to a nonstandard model $M$ of natural numbers. Such models exist in abundance  and are studied extensively in model theory. For any such $M$,  $\N$ is a proper initial segment of $M$, and moreover  $M\backslash \N$ does not contain a first element. Thus replacing $\N$ by  $M$, $Small_i$ by $\N$ and $not$-$Small_i$ by $M\backslash \N$, we have the picture of Figure 3 which is structurally identical  to that of Figure 2, but also mathematically possible.
$$\overbrace{\underbrace{\mbox{---------------------------}\cdots}_{\N})
(\underbrace{\cdots\mbox{----------------------}}_{M\backslash \N}}^{M}$$
\begin{center}
Figure 3
\end{center}
In \cite{Tz98} I have proposed such an implementation by means of sets of nonstandard  natural numbers. Such sets do capture nicely many theoretical aspects of vagueness, but the main disadvantage is the  infeasibility of nonstandard numbers, i.e. the elements of $M\backslash \N$, since they  are absolutely remote from our real world, and so no practical understanding of real vagueness phenomena can provide.  The horizons of real life occur necessarily within $\N$, and it's only with respect to $\N$ that their paradoxes should  be raised. Connections of non-standard analysis with the sorites paradox can be found also in \cite{De18} and \cite{Itz21}.

Now interrupted watching   offers indeed a realistic and consistent  implementation  of  horizon crossing   within the existent time axis $\N$. It says that horizon crossings  occur when we do not pay attention to them, when we ``are not there'', i.e.,  when  we do not watch. So instead of the situation of Figure 2, which concerns idealized observers, in real life we have the situation shown in Figure 4. For as long as we consciously attend  the growth of  small numbers, they remain small. At some time we shall quit and turn our attention to something  else. If after a period of  time  we turn  our attention back to the growing numbers again, we may find that they have already become large.

$$\overbrace{\underbrace{\mbox{----------------------}}_{Small_i}]\underbrace{ \quad \quad \quad \quad}_{\mbox{\scriptsize watching gap}} [\underbrace{\mbox{-----------------}}_{non-Small_i}}^{\N}$$
\begin{center}
Figure 4
\end{center}

So to the question  ``what is the practical tool by which horizons can be  implemented in real world,'' I believe  the answer is: the implementation of  horizons  through interrupted watching, as described above, is not only satisfactory  but rather the only possible one. One can give numerous
examples of horizon crossings as manifestations of interrupted watching. Each
time we meet a friend after a sufficiently long time, and watch
the changes in his/her face, the wrinkles that he hadn't before, the
hairs that grew slightly grey etc.,  we know that this person has
crossed a horizon. He/she has {\em perceivably} changed in the
period between the two encounters, during which he/she has been
unwatched by us. On the contrary, we do
not realize the gradual changes of our spouses with whom we are
together all the time. We cannot witness a  thing
at the {\em very} moment  of horizon crossing simply because this moment belongs to a watching gap. So this moment is going to be  irreparably elusive.  To put it more  poetically,  our lives consist of  alternating  encounters (watchings) and separations (interruptions), contacts and avoidances. Horizons are crossed only in between.

\vskip 0.3in

\begin{center}
APPENDIX
\end{center}

\begin{center}
\textbf{The three-valued logic of FOS}
\end{center}

Recall that as follows from  Definition \ref{D:support} and the subsequent discussion,  there exist $\phi$ and ${\cal F},i,t$ such that ${\cal F},i,t\not\Vdash \phi$ and ${\cal F},i,t\not\Vdash \neg\phi$. If we interpret ``${\cal F},i,t\!\Vdash\phi$'' as ``$\phi$ is true in ${\cal F},i,t$''  and ``${\cal F},i,t\Vdash\neg\phi$'' as ``$\phi$ is false  in ${\cal F},i,t$'', and write ${\cal F}_{i,t}(\phi)=\textsf{T}$ and  ${\cal F}_{i,t}(\phi)=\textsf{F}$, respectively, then we can interpret ``${\cal F},i,t\!\not\Vdash \phi$ and ${\cal F},i,t\!\not\Vdash\neg \phi$'' as ``$\phi$ is undefined in ${\cal F},i,t$'' and write ${\cal F}_{i,t}(\phi)=\textsf{U}$. Specifically we set:

\begin{Def} \label{D:truthvalues}
We define for every sentence $\phi$:

$\bullet$ ${\cal F}_{i,t}(\phi)=\textsf{T}$ iff ${\cal F},i,t\Vdash \phi$.

$\bullet$ ${\cal F}_{i,t}(\phi)=\textsf{F}$ iff ${\cal F},i,t\Vdash \neg\phi$.

$\bullet$ ${\cal F}_{i,t}(\phi)=\textsf{U}$ iff ${\cal F},i,t\not\Vdash \phi$ and ${\cal F},i,t\not\Vdash \neg\phi$.
\end{Def}

Notice that by Lemma \ref{L:sound} (i), i.e. the soundness of FOS, the above definition is unambiguous, that is, there are  no ${\cal F}$, $i$, $t$, and $\phi$ such that  ${\cal F}_{i,t}(\phi)$ is  at the same $\textsf{T}$ and $\textsf{F}$. So FOS is a sound three-valued logic, with truth values $\{\textsf{T},\textsf{U},\textsf{F}\}$ and designated value $\textsf{T}$.
We shall prove below that  this logic is {\em identical} to strong Kleene logic, in the sense that their truth tables for $\neg$, $\wedge$ and $\vee$ are identical. In the next two lemmas for simplicity we drop the subscript from the operator ${\cal F}_{i,t}(\cdot)$ and write just  ${\cal F}(\cdot)$.

\begin{Lem} \label{L:neg}
For every $\phi$:

(i) ${\cal F}(\phi)=\textsf{F} \ \Leftrightarrow {\cal F}(\neg\phi)=\textsf{T}$.

(ii) ${\cal F}(\phi)=\textsf{T} \ \Leftrightarrow {\cal F}(\neg\phi)=\textsf{F}$.

(iii) ${\cal F}(\phi)=\textsf{U} \ \Leftrightarrow {\cal F}(\neg\phi)=\textsf{U}$.
\end{Lem}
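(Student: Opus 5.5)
The plan is to unfold Definition \ref{D:truthvalues} on both sides of each equivalence and then use clause (g) of Definition \ref{D:support}, which says that ${\cal F},i,t\Vdash\neg\neg\phi$ iff ${\cal F},i,t\Vdash\phi$. Soundness, Lemma \ref{L:sound}(i), guarantees that the three values are mutually exclusive, so each of ${\cal F}(\phi)$ and ${\cal F}(\neg\phi)$ is a single well-defined value. No induction on $\phi$ is needed; clause (g) applies to arbitrary $\phi$.

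For (i): by Definition \ref{D:truthvalues}, ${\cal F}(\phi)=\textsf{F}$ means ${\cal F},i,t\Vdash\neg\phi$. Applying the same definition to the sentence $\neg\phi$, this is exactly ${\cal F}(\neg\phi)=\textsf{T}$. So (i) holds by definition.

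For (ii): ${\cal F}(\neg\phi)=\textsf{F}$ means ${\cal F},i,t\Vdash\neg\neg\phi$. By clause (g) this is equivalent to ${\cal F},i,t\Vdash\phi$, i.e.\ ${\cal F}(\phi)=\textsf{T}$.

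For (iii): ${\cal F}(\neg\phi)=\textsf{U}$ means ${\cal F},i,t\not\Vdash\neg\phi$ and ${\cal F},i,t\not\Vdash\neg\neg\phi$. By clause (g) the second conjunct is equivalent to ${\cal F},i,t\not\Vdash\phi$. Together with the first conjunct, this is precisely ${\cal F}(\phi)=\textsf{U}$.

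Alternatively, (iii) follows from (i) and (ii) by elimination among the three mutually exclusive and exhaustive values. The only point that needs any care is the use of clause (g) for $\neg\neg\phi$ when $\phi$ is arbitrary rather than atomic. Clause (g) is stated for every sentence, so I expect no real obstacle.
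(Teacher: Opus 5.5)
Your proof is correct and is essentially the paper's own argument: (i) is immediate from Definition~\ref{D:truthvalues}, and (ii) and (iii) follow by unfolding that definition and applying clause (g) of Definition~\ref{D:support} to $\neg\neg\phi$. Your alternative derivation of (iii) from (i) and (ii) by elimination is also valid, since $\textsf{U}$ is defined as exactly the case where neither $\textsf{T}$ nor $\textsf{F}$ holds.
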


{\em Proof.} (i) By Def. \ref{D:truthvalues} we have immediately:
$${\cal F}(\phi)=\textsf{F}\Leftrightarrow {\cal F}_{i,j}\Vdash \neg\phi\Leftrightarrow {\cal F}(\neg\phi)=\textsf{T}.$$

(ii) By Def. \ref{D:truthvalues} and Def. \ref{D:support} (g) we have:
$${\cal F}(\phi)=\textsf{T}\Leftrightarrow {\cal F}_{i,j}\Vdash \phi\Leftrightarrow  {\cal F}_{i,j}\Vdash \neg\neg\phi\Leftrightarrow {\cal F}(\neg\phi)=\textsf{F}.$$

(iii) Let ${\cal F}(\phi)=\textsf{U}$, i.e., ${\cal F},i,t\not\Vdash \phi$ and ${\cal F},i,t\not\Vdash \neg\phi$. By Def. \ref{D:support} (g), this is equivalently written ${\cal F},i,t\not\Vdash \neg\phi$ and ${\cal F},i,t\not\Vdash \neg\neg\phi$, which means by definition ${\cal F}(\neg\phi)=\textsf{U}$. \telos

\vskip 0.2in

Now if ${\cal F}(\phi), {\cal F}(\psi)\in \{\textsf{T}, \textsf{F}\}$,  the truth values  of $\phi\wedge\psi$ and $\phi\vee\psi$ are computed exactly as in classical logic, as follows from Def \ref{D:support}. So we have to check the  values of $\phi\wedge\psi$ and $\phi\vee\psi$ only in the cases  where some of the  ${\cal F}(\phi)$ and  ${\cal F}(\psi)$ is  $\textsf{U}$.

\begin{Lem} \label{L:wedge}
For every $\phi, \psi$:

(i) If ${\cal F}(\phi)=\textsf{T}$ and ${\cal F}(\psi)=\textsf{U}$, then (a) ${\cal F}(\phi\wedge \psi)=\textsf{U}$ and (b) ${\cal F}(\phi\vee \psi)=\textsf{T}$.

(ii) If ${\cal F}(\phi)=\textsf{F}$ and ${\cal F}(\psi)=\textsf{U}$, then (a) ${\cal F}(\phi\wedge \psi)=\textsf{F}$ and (b) ${\cal F}(\phi\vee \psi)=\textsf{U}$.

(iii) If ${\cal F}(\phi)=\textsf{U}$ and ${\cal F}(\psi)=\textsf{U}$, then (a) ${\cal F}(\phi\wedge \psi)=\textsf{U}$ and (b) ${\cal F}(\phi\vee \psi)=\textsf{U}$.
\end{Lem}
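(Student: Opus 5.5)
The plan is to unfold Definition \ref{D:truthvalues} in each case. For each compound, I would determine separately whether ${\cal F},i,t\Vdash\chi$ and whether ${\cal F},i,t\Vdash\neg\chi$. Clauses (c) and (d) of Def. \ref{D:support} handle the positive side. Clauses (e) and (f) turn $\neg(\phi\wedge\psi)$ and $\neg(\phi\vee\psi)$ into $\neg\phi\vee\neg\psi$ and $\neg\phi\wedge\neg\psi$, which are then settled by (d) and (c). The hypotheses translate as follows. ${\cal F}(\phi)=\textsf{T}$ means ${\cal F},i,t\Vdash\phi$, and by Lemma \ref{L:sound}(i) also ${\cal F},i,t\not\Vdash\neg\phi$. ${\cal F}(\phi)=\textsf{F}$ means ${\cal F},i,t\Vdash\neg\phi$ and ${\cal F},i,t\not\Vdash\phi$. ${\cal F}(\psi)=\textsf{U}$ means neither $\psi$ nor $\neg\psi$ is supported.

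For (i)(a), $\phi\wedge\psi$ is not supported, since $\psi$ fails. Its negation reduces via (e) to $\neg\phi\vee\neg\psi$, and neither disjunct is supported, so the value is $\textsf{U}$. For (i)(b), $\phi$ is supported, so $\phi\vee\psi$ is supported by (d); the value is $\textsf{T}$, and soundness excludes $\textsf{F}$.

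For (ii)(a), $\neg\phi$ is supported, so $\neg\phi\vee\neg\psi$ is, and hence by (e) $\neg(\phi\wedge\psi)$ is; the value is $\textsf{F}$. For (ii)(b), neither $\phi$ nor $\psi$ is supported, so $\phi\vee\psi$ is not. Its negation reduces via (f) to $\neg\phi\wedge\neg\psi$, which fails because $\neg\psi$ is not supported, giving $\textsf{U}$.

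For (iii), every one of $\phi,\psi,\neg\phi,\neg\psi$ is unsupported. Then (c) and (d) make $\phi\wedge\psi$ and $\phi\vee\psi$ unsupported, and (e), (f) make their negations unsupported, so both values are $\textsf{U}$.

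There is no real obstacle here. The only point needing care is that membership in $\textsf{T}$ or $\textsf{F}$ must be shown to be exclusive, which is exactly what Lemma \ref{L:sound}(i) guarantees. Beyond that, clauses (e) and (f) have to be applied as biconditionals, so that non-support of the negation can be inferred as well as support.
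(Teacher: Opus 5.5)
Your proposal is correct and follows essentially the same route as the paper: unfold Definition \ref{D:truthvalues}, settle the positive side with clauses (c) and (d) and the negated side with (e) and (f) of Def.~\ref{D:support}, and rely on soundness for exclusivity. You are slightly more explicit than the paper, which uses Lemma \ref{L:sound}(i) only implicitly in (i)(a) and leaves (iii) as ``easy to check''.
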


{\em Proof.} (i) (a) Given the assumptions, we have to show that   ${\cal F},i,t\not \Vdash \phi\wedge \psi$ and  ${\cal F},i,t\not \Vdash \neg(\phi\wedge \psi)$, or ${\cal F},i,t\not \Vdash \neg\phi\vee \neg\psi$. Indeed, if ${\cal F},i,t\Vdash \phi\wedge \psi$, then ${\cal F},i,t\Vdash \psi$, which is false since by assumption ${\cal F},i,t\not\Vdash \psi$ and ${\cal F},i,t\not\Vdash \neg\psi$. If on the other hand ${\cal F},i,t. \Vdash \neg\phi\vee \neg\psi$, then ${\cal F},i,t\Vdash \neg\phi$ or ${\cal F},i,t\Vdash  \neg\psi$ which are both false by our assumptions. (b) Since ${\cal F}(\phi)=\textsf{T}$, i.e., ${\cal F},i,t\Vdash \phi$, obviously ${\cal F},i,t\Vdash \phi\vee\psi$ too, and hence ${\cal F}(\phi\vee \psi)=\textsf{T}$.

(ii) (a) Since ${\cal F}(\phi)=\textsf{F}$ we have ${\cal F},i,t\Vdash \neg \phi$, hence ${\cal F},i,t\Vdash \neg \phi\vee \neg\psi$, or ${\cal F},i,t\Vdash \neg(\phi\wedge\psi)$. Thus ${\cal F}(\neg(\phi\wedge\psi))=\textsf{T}$ and hence, by Lemma \ref{L:neg}, ${\cal F}(\phi\wedge\psi)=\textsf{F}$. (b) We show that ${\cal F},i,t\not\Vdash \phi\vee\psi$ and ${\cal F},i,t\not\Vdash \neg\phi\wedge\neg\psi$. Indeed, if ${\cal F},i,t\Vdash \phi\vee\psi$ then ${\cal F},i,t\Vdash \phi$ or ${\cal F},i,t\Vdash \psi$, both of which contradict our assumptions. If also ${\cal F},i,t\Vdash \neg\phi\wedge\neg\psi$, then ${\cal F},i,t\Vdash \neg\psi$, contrary again to our assumption about $\psi$.

(iii) Easy to check. \telos.

\vskip 0.2in

The truth values for $\neg\phi$, $\phi\wedge\psi$ and $\phi\vee\psi$ as follow from Lemmas \ref{L:neg} and \ref{L:wedge} are summarized in the following truth tables:

\vskip 0.2in

\begin{center}
\begin{tabular}{c|c}
$\neg$ & \\
\hline
\textsf{T} & \textsf{F} \\
\textsf{U} & \textsf{U}\\
\textsf{F} & \textsf{T}\\
\end{tabular}
\quad\quad
\begin{tabular}{c|ccc}
$\wedge$ & \textsf{T} & \textsf{U} & \textsf{F}\\
\hline
\textsf{T} & \textsf{T} & \textsf{U} & \textsf{F}\\
\textsf{U} & \textsf{U} & \textsf{U} & \textsf{F}\\
\textsf{F} & \textsf{F} &  \textsf{F}& \textsf{F}\\
\end{tabular}
\quad \quad
\begin{tabular}{c|ccc}
$\vee$ & \textsf{T} & \textsf{U} & \textsf{F}\\
\hline
\textsf{T} & \textsf{T} & \textsf{T} & \textsf{T}\\
\textsf{U} & \textsf{T} & \textsf{U} & \textsf{U}\\
\textsf{F} & \textsf{T} &  \textsf{U}& \textsf{F}\\
\end{tabular}
\end{center}

These truth tables coincide with  those of strong Kleene logic (see \cite{Pa-SJ24} or \cite{Kl52}).

\vskip 0.2in

\textbf{Acknowledgements} I am very indebted to a reviewer of a previous version of the paper who had observed that FOS can be treated as a 3-valued logic and had suspected that  this logic might have similarities with strong Kleene 3-valued logic. It was by following his/her recommendation that I came to prove that the logic of FOS is indeed strong Kleene logic.


\begin{thebibliography}{99}
\bibitem{De18}
    W. Dean, Strict finitism, feasibility, and the sorites, {\em Rev.  Symb. Logic} {\bf 11} (2018), no. 2, 295-346.
\bibitem{Fi75}
    K. Fine, Vagueness, truth and logic, {\em Synth\'{e}se} {\bf 30}
    (1975),  no. 3/4, 265-300.
\bibitem{Itz21}
   Yair Itzhaki, Qualitative versus quantitative representation: a non-standard analysis of the sorites paradox, {\em Linguistics and Philosophy} {\bf 44} (2021), no. 5, 1013-1044.
\bibitem{Ke00}
   R. Keefe, {\em Theories of vagueness,} Cambridge U.P., 2000.
\bibitem{Kl52}
   S. Kleene, {\em Introduction to metamathematics}, North-Holland,  Amsterdam,
   1952.
\bibitem{Pa10}
   P. Pagin, Vagueness and central gaps,  in: {\em Cuts and Clouds,}  R. Dietz and S. Moruzzi (eds.), Oxford University Press 2010, pp. 254--272.
\bibitem{Pa11}
   P. Pagin, Vagueness and domain restriction, in: {\em Vagueness and Language Use,} P. Egr\'{e} and N. Klinedinst (eds.),  Palgrave MacMillan 2011,  pp. 283--307.
\bibitem{Pa-SJ24}
   F. Paoli and G. St. John, Editorial Introduction to the Special Issue on Strong and Weak Kleene Logics, {\em Studia Logica} \textbf{112} (2024), no. 6, 1201-1214.
\bibitem{SN96}
    A.F. Sanders and O. Neumann (Eds), {\em Handbook of
    perception and action, Vol 3: Attention,} Academic Press 1996, pp.
    277-331.
\bibitem{Sh06}
  S. Shapiro, {\em Vagueness in Context,} Clarendon Press -
  Oxford, 2006.
\bibitem{SC82}
    R.N. Shepard and L.A. Cooper, {\em Mental images and their transformations,}
    Cambridge, MIT Press/ Bradford Books, 1982.
    277-331.
\bibitem{So94}
  R.L. Solso, {\em Cognition and visual arts,} Bradford Books,  MIT Press, 1994.
\bibitem{SW95}
  G. Sperling and E. Wechselgartner, Episodic theory of the
  dynamics of spatial attention, {\em Psychological Review} {\bf
  102} (1995), 503-532.
\bibitem{Tz98}
  A. Tzouvaras, Modeling  vagueness by nonstandardness, {\em Fuzzy
  Sets and Systems} {\bf 94} (1998), 385-396.
\bibitem{Tz03}
   A. Tzouvaras, An axiomatization of ``very'' within systems of set theory, {\em Studia Logica} {\bf 73} (2003), 413-430.
\bibitem{Vo79}
  P. Vop\v{e}nka, {\em Mathematics in the Alternative Set
  Theory,}  Teubner Texte, Leipzig, 1979.
\end{thebibliography}
\end{document}